\title{Free resolutions of algebras}
\author{Joe Chuang \and Alastair King}
\date{18 Oct 2012}
\theoremstyle{plain}
\newtheorem{theorem}{Theorem}[section]
\newtheorem{proposition}[theorem]{Proposition}
\newtheorem{lemma}[theorem]{Lemma}
\newtheorem{corollary}[theorem]{Corollary}
\theoremstyle{definition}
\newtheorem{example}[theorem]{Example}
\newtheorem{remark}[theorem]{Remark}
\numberwithin{equation}{section}
\newcommand{\ZZ}{\mathbb{Z}}
\newcommand{\AM}{\mathcal{A}}
\newcommand{\AF}{\mathcal{F}}
\newcommand{\KF}{K}
\newcommand{\isom}{\cong}
\newcommand{\subs}{\subseteq}
\newcommand{\tensalg}[2]{T^+_{#1}\left(#2\right)}
\newcommand{\gr}[1]{^{(#1)}}
\newcommand{\qiso}{\simeq}
\newcommand{\grd}{_\bullet}
\newcommand{\comult}{\Delta}
\newcommand{\cobar}{\operatorname{Cobar}}
\newcommand{\Hom}{\operatorname{Hom}}
\newcommand{\Ext}{\operatorname{Ext}}
\newcommand{\Tor}{\operatorname{Tor}}
\newcommand{\id}{\operatorname{id}}
\newcommand{\ass}{\mathcal{S}}
\newcommand{\B}{B}
\newcommand{\Bi}{B^\infty}
\newcommand{\bra}{\texttt{[}}
\newcommand{\ket}{\texttt{]}}
\newcommand{\inpt}{\bullet}
\newcommand{\Brakc}[1]{\mathcal{B}^{[#1]}}
\newcommand{\Brako}[1]{\mathcal{B}^{(#1)}}
\newcommand{\AG}{\operatorname{Gr}}
\begin{document}
\maketitle
\begin{abstract}
Given an algebra $A$, presented by generators and relations,
i.e. as a quotient of a tensor algebra by an ideal,
we construct a free algebra resolution of $A$,
i.e. a differential graded algebra which is quasi-isomorphic to
$A$ and which is itself a tensor algebra.
The construction rests combinatorially on the set of bracketings
that arise naturally in the description of a free contractible differential
graded algebra with given generators.
\end{abstract}

\goodbreak\section{Introduction} \label{sec1}

Let $S$ be any ring.
We write $\otimes$ for $\otimes_S$, 
and, for any $S,S$-bimodule $V$, we write $V^n$ for $V^{\otimes n}$.
Further, we write
\begin{equation}
 \tensalg{S}{V} = \bigoplus_{n\geq 1} V^n,
 \qquad
  V\gr{m} = \bigoplus_{n\geq m} V^n,
\end{equation}
for the non-unital tensor algebra of `free words' in $V$ 
over $S$ and for the ideal of words of length at least $m$, respectively.

Our objective in this paper is to construct a free algebra resolution
of an arbitrary (non-unital) $S$-algebra
$A$ presented in terms of generators and relations, that is, 
\[
  A=\tensalg{S}{V}/I,
\] 
for some $S,S$-bimodule $V$ and ideal $I$.
As a first step, in Section~\ref{sec2}, we work towards describing the case
where $I\subs V\gr{2}$, i.e. the generators are `minimal', and $I$ is homogeneous
(Theorem~\ref{thm:homog}).
This case is controlled by the by-now-familiar $A_\infty$ combinatorics
of rooted trees or the corresponding set of bracketings 
(see, for example, Figure~\ref{fig:bra4}).
By extending this set of bracketings in various ways, 
we find, in Sections~\ref{sec3} and \ref{sec4},
that the essential nature of the proof becomes more
transparent in the general case (Theorem~\ref{thm:main}).

As a typical motivating example, consider a quiver $Q$ and field $k$.
Set $S=k^{Q_0}$, the semi-simple algebra spanned by the vertex idempotents, 
and $V=k^{Q_1}$, the $S,S$-bimodule spanned by the arrows. 
Then the augmented unital algebra $S\oplus\tensalg{S}{V}$ 
is the path algebra $kQ$ and $S\oplus A$ is a `quiver algebra', 
i.e. is presented by a quiver with relations.
Such augmentation allows one to move easily between unital 
and non-unital $S$-algebras;
we find it notationally simpler to work in the
slightly less familiar non-unital context in this paper.

Giving a free resolution of $A$ as an $S$-algebra is,
almost tautologically, the same as giving 
a `locally finite' $A_\infty$-coalgebra over $S$
\[
  K\grd =\bigoplus_{n\geq 1} K_n,
\]
where each $K_n$ is an $S,S$-bimodule
and such that $A$ is quasi-isomorphic to 
the differential graded (dg) algebra
\begin{equation}
 \cobar{ K\grd } = \bigl( \tensalg{S}{K\grd[1]},d \bigr).
\end{equation}
Here $K\grd[1]$ denotes the shifted complex, defined by $K[1]_n=K_{n+1}$.
In general, the quasi-isomorphism should be induced by some $S,S$-morphism 
\[
  \theta\colon K_1\to A
\]
such that $\theta(K_1)$ generates $A$.
In our case, since the generators $V$ of $A$ are explicitly specified,
we will suppose that  
$K_1=V$ and that $\theta\colon V\to A$ is the specifying map. 
Then the degree zero term in $\cobar{ K\grd }$ is $\tensalg{S}{V}$
and the quotient map $\tensalg{S}{V} \to A$ will be the quasi-isomorphism.

The cobar construction here is `almost tautological' in that
the differential $d$ on $\tensalg{S}{K\grd[1]}$ 
may be considered simply as an efficient way of encoding the $A_\infty$-coalgebra 
structure on $K\grd$.
More precisely, the $A_\infty$-coalgebra structure consists of
$S,S$-morphisms 
\[
  \Delta_k\colon K_n \to K_{m_1}\otimes \cdots \otimes K_{m_k},
\]
for each $k\geq 1$ and $\sum_{i=1}^k m_i=n-2+k$,
satisfying the conditions
\[
\sum_{r+s+t=n} (-1)^{r+st} 
 \bigl(1^{\otimes r} \otimes \Delta_s \otimes 1^{\otimes t}\bigr)
 \circ \Delta_{r+1+t} =0 
\]
for all $n\geq 1$ (cf. \cite[Definition 1.2.1.8]{Lef}).
These may be packaged into the single condition
that the endomorphism $d$ of $\tensalg{S}{K\grd[1]}$, determined by 
\begin{equation}
\label{eq:d}
d=-[1]^{\otimes n}\circ \Delta_n\circ [-1] : K\grd[1] \to K\grd[1]^{\otimes n}
\end{equation}
and the (graded) Leibniz rule (see \eqref{eq:leftLeib}),
satisfies $d^2=0$ (cf. \cite[\S 1.2]{Lef}).
Note also that, in evaluating $[1]^{\otimes n}$,
we use the standard Koszul sign rule; 
for example 
\[
 [1]\otimes[1]: = (-1)^{m}: K_m\otimes K_n \to K_m\otimes K_n.
\]

\begin{remark}
The condition that $K\grd$ is `locally finite' is simply the requirement
that \eqref{eq:d} does define a differential on $\tensalg{S}{K\grd[1]}$,
which is defined as an infinite direct sum, and not just on its completion,
i.e. the corresponding direct product. In other words, 
for any element $x\in K\grd$, the coproducts $\Delta_n(x)$ are non-zero
for only finitely many $n$. It will be clear that all coalgebras 
we will encounter have this property 
and we will not explicitly mention it again.
\end{remark}

\goodbreak\section{First examples} \label{sec2}

We begin by discussing the sort of construction we are looking for
in the case that $I=V\gr{2}$ and
so $A=\tensalg{S}{V}/I$ is just $V$ with trivial multiplication. 

As a warm-up, we first observe that
the classical candidate for $K\grd$, in this case, is
the free coassociative coalgebra generated by $V$,
\begin{equation}
  \B\grd(V) = \bigoplus_{n\geq 1} V^n,
\end{equation}
which is what the usual (unaugmented) bar construction yields.
The comultiplication 
$\comult\colon \B_{m+n}\to \B_m\otimes \B_n$
is tautological, 
i.e. is the natural identification 
$\tau\colon V^{m+n} \to V^{m}\otimes V^{n}$.
This gives the $A_\infty$-coproduct $\comult_2$,
with all other coproducts vanishing.

As one should expect, the dg algebra $\AM\grd = \cobar\B\grd(V)$
is a free resolution of $A$. 
Indeed, we may explicitly write
\begin{equation}
 \AM\grd = \bigoplus_{\pi\in \Pi} V^\pi,
\end{equation}
where $\Pi$ is the set of all finite sequences $\pi=(\pi_1,\dots,\pi_r)$
of positive integers, while 
\[ V^{\pi}=V^n, \quad\text{for $n=\sum_k \pi_k$,}
\]
which is in homological degree $d$, i.e. is a summand of $\AM_d$, for   
\[
  d=|\pi|=\sum_k (\pi_k-1).
\]  
Multiplication in $\AM\grd$ corresponds to concatenation of sequences,
i.e. is given by the tautological maps 
$\tau\colon V^\pi\otimes V^\eta\to V^{\pi\eta}$,
while the non-trivial components of the differential 
$d\colon V^\pi\to V^{\pi'}$ are $\pm \tau$  
whenever $\pi'$ is obtained from 
$\pi$ by splitting some term in the sequence into two.
The sign is $(-1)^{\left|\left(\pi'_1,\ldots,\pi'_p\right)\right|}$
when the $p$-th term of $\pi$ is split.

To see explicitly that $\AM\grd$ is quasi-isomorphic to the algebra $A$,
we notice first that, for each $n\geq 0$, there is precisely one sequence 
of degree $0$ summing to $n$, namely $\pi=(1,\dots,1)$, 
and this yields $\AM_0=\tensalg{S}{V}$.
On the other hand, for each $n\geq 2$, the part of $\AM\grd$ consisting
of summands equal to $V^n$ is given by 
$V^n\otimes_\ZZ C\grd^{aug}(\sigma_{n-2})[1]$
the (shifted and augmented) chain complex of the $(n-2)$-simplex, 
which is exact, as required.

From the point-of-view of this paper, a more natural, but rather bigger,
candidate for $K\grd$ is the free $A_\infty$-coalgebra generated by $V$,
\begin{equation}
\label{eq:Kfree}
  \Bi\grd(V) = \bigoplus_{\beta\in\Brakc2} V^\beta .
\end{equation}
Here $\Brakc2$ is the set of all closed non-degenerate bracketings
(or equivalently those that correspond to `rooted trees') and the
bracketed tensor product $V^\beta=V^n$
if $\beta$ has $n$ inputs and is in homological degree $d$ 
if $\beta$ has $d-1$ pairs of brackets.
Note that ``closed'' means that the whole expression is 
enclosed in an outer bracket,
while ``non-degenerate'' means that each inner pair of 
brackets encloses at least two inputs.
For example, one summand of $\Bi_4(V)$ would be 
\[
  V^\beta = \bra\bra V\otimes V \ket
           \otimes \bra V\otimes V\otimes V \ket \ket = V^5
\]
for $\beta=\bra\bra\inpt\inpt\ket\bra\inpt\inpt\inpt\ket\ket$
or $\bra\bra2\ket\bra3\ket\ket$, a 3-fold bracketing of 5 inputs.

The differential $\Delta_1$ has non-zero components 
\[
  (-1)^{m-1}\tau\colon V^\beta \to V^{\beta(\widehat{m})},
\]
where $\beta(\widehat{m})$ is obtained from $\beta$ by removing the $m$th internal left bracket $\bra$ (counted from the left) together with its matching right bracket $\ket$.
Precisely one higher coproduct $\Delta_k$, for some $k\geq 2$, is defined on each $V^\beta$ 
and corresponds to removing the outer bracket and writing what is inside as a concatenation
of $k$ closed bracketings, up to a sign. For example, there is a component 
\[
  \Delta_4 \colon V^{\bra\bra2\ket2\bra3\ket\ket} 
  \to V^{\bra2\ket}\otimes V\otimes V\otimes V^{\bra3\ket}.
\]
For the general component
\[
  \Delta_k \colon V^{\bra\beta_1\ldots\beta_k\ket} 
  \to V^{\beta_1}\otimes\ldots\otimes V^{\beta_k}
\]
the sign we choose (see Remark~\ref{rem:AFcoalg} for an explanation)
is
\begin{equation}
\label{eq:sign!}
    -(-1)^{\sum_{i=1}^{k}(k-i)d_i},
\end{equation}
where $d_i$ is the homological degree of $V^{\beta_i}$ in $\Bi\grd(V)$.

By convention, the only 0-fold closed bracketing is 1 and so $\Bi_1(V)=V$.
On the other hand, the 1-fold closed bracketings 
are $\bra k\ket$, for each $k\geq 2$, and hence
$\Bi_2(V) = V\gr{2}$.
As a further example, 
the possible bracketings of 4 inputs are 3, 2 or 1-fold, 
as listed in Figure~\ref{fig:bra4}
together with their corresponding rooted trees.
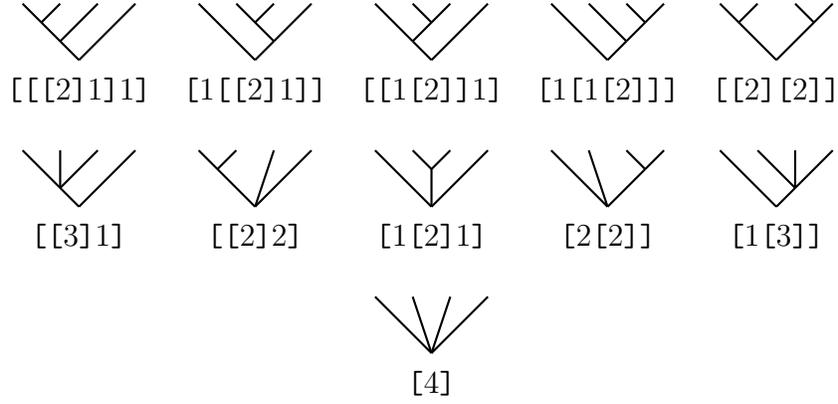
\begin{figure}
\begin{center}
\begin{tabular}{ccccc}
\begin{tikzpicture}[scale=0.5]
\foreach \j in {1,2,3,4} {\coordinate (a\j) at (\j,1.5);}
\foreach \j in {1,2,3} {\coordinate (b\j) at (0.5+\j,1.0);}
\foreach \j in {1,2} {\coordinate (c\j) at (1.0+\j,0.5);}
\coordinate (z) at (2.5,0);
\foreach \t/\h in {a1/b1, a2/b1, a3/c1, a4/z, b1/c1, c1/z} 
  {\draw[thick] (\t)--(\h);} 
\end{tikzpicture} 
&
\begin{tikzpicture}[scale=0.5]
\foreach \j in {1,2,3,4} {\coordinate (a\j) at (\j,1.5);}
\foreach \j in {1,2,3} {\coordinate (b\j) at (0.5+\j,1.0);}
\foreach \j in {1,2} {\coordinate (c\j) at (1.0+\j,0.5);}
\coordinate (z) at (2.5,0);
\foreach \t/\h in {a1/z, a2/b2, a3/b2, a4/c2, b2/c2, c2/z}
  {\draw[thick] (\t)--(\h);} 
\end{tikzpicture} 
&
\begin{tikzpicture}[scale=0.5]
\foreach \j in {1,2,3,4} {\coordinate (a\j) at (\j,1.5);}
\foreach \j in {1,2,3} {\coordinate (b\j) at (0.5+\j,1.0);}
\foreach \j in {1,2} {\coordinate (c\j) at (1.0+\j,0.5);}
\coordinate (z) at (2.5,0);
\foreach \t/\h in {a1/c1, a2/b2, a3/b2, a4/z, b2/c1, c1/z}
  {\draw[thick] (\t)--(\h);} 
\end{tikzpicture} 
&
\begin{tikzpicture}[scale=0.5]
\foreach \j in {1,2,3,4} {\coordinate (a\j) at (\j,1.5);}
\foreach \j in {1,2,3} {\coordinate (b\j) at (0.5+\j,1.0);}
\foreach \j in {1,2} {\coordinate (c\j) at (1.0+\j,0.5);}
\coordinate (z) at (2.5,0);
\foreach \t/\h in {a1/z, a2/c2, a3/b3, a4/b3, b3/c2, c2/z}
  {\draw[thick] (\t)--(\h);} 
\end{tikzpicture} 
&
\begin{tikzpicture}[scale=0.5]
\foreach \j in {1,2,3,4} {\coordinate (a\j) at (\j,1.5);}
\foreach \j in {1,2,3} {\coordinate (b\j) at (0.5+\j,1.0);}
\foreach \j in {1,2} {\coordinate (c\j) at (1.0+\j,0.5);}
\coordinate (z) at (2.5,0);
\foreach \t/\h in {a1/b1, a2/b1, a3/b3, a4/b3, b1/z, b3/z}
  {\draw[thick] (\t)--(\h);} 
\end{tikzpicture} 
\\
  $\bra\bra\bra2\ket1\ket1\ket$ 
& $\bra1\bra\bra2\ket1\ket\ket$ 
& $\bra\bra1\bra2\ket\ket1\ket$ 
& $\bra1\bra1\bra2\ket\ket\ket$ 
& $\bra\bra2\ket\bra2\ket\ket$ \\
&&&&\\
\begin{tikzpicture}[scale=0.5]
\foreach \j in {1,2,3,4} {\coordinate (a\j) at (\j,1.5);}
\foreach \j in {1,2} {\coordinate (c\j) at (1.0+\j,0.5);}
\coordinate (z) at (2.5,0);
\foreach \t/\h in {a1/c1, a2/c1, a3/c1, a4/z, c1/z}
  {\draw[thick] (\t)--(\h);} 
\end{tikzpicture} 
&
\begin{tikzpicture}[scale=0.5]
\foreach \j in {1,2,3,4} {\coordinate (a\j) at (\j,1.5);}
\foreach \j in {1,2,3} {\coordinate (b\j) at (0.5+\j,1.0);}
\foreach \j in {1,2} {\coordinate (c\j) at (1.0+\j,0.5);}
\coordinate (z) at (2.5,0);
\foreach \t/\h in {a1/b1, a2/b1, a3/z, a4/z, b1/z}
  {\draw[thick] (\t)--(\h);} 
\end{tikzpicture} 
&
\begin{tikzpicture}[scale=0.5]
\foreach \j in {1,2,3,4} {\coordinate (a\j) at (\j,1.5);}
\foreach \j in {1,2,3} {\coordinate (b\j) at (0.5+\j,1.0);}
\foreach \j in {1,2} {\coordinate (c\j) at (1.0+\j,0.5);}
\coordinate (z) at (2.5,0);
\foreach \t/\h in {a1/z, a2/b2, a3/b2, a4/z, b2/z}
  {\draw[thick] (\t)--(\h);} 
\end{tikzpicture} 
&
\begin{tikzpicture}[scale=0.5]
\foreach \j in {1,2,3,4} {\coordinate (a\j) at (\j,1.5);}
\foreach \j in {1,2,3} {\coordinate (b\j) at (0.5+\j,1.0);}
\foreach \j in {1,2} {\coordinate (c\j) at (1.0+\j,0.5);}
\coordinate (z) at (2.5,0);
\foreach \t/\h in {a1/z, a2/z, a3/b3, a4/b3, b3/z}
  {\draw[thick] (\t)--(\h);} 
\end{tikzpicture} 
&
\begin{tikzpicture}[scale=0.5]
\foreach \j in {1,2,3,4} {\coordinate (a\j) at (\j,1.5);}
\foreach \j in {1,2,3} {\coordinate (b\j) at (0.5+\j,1.0);}
\foreach \j in {1,2} {\coordinate (c\j) at (1.0+\j,0.5);}
\coordinate (z) at (2.5,0);
\foreach \t/\h in {a1/z, a2/c2, a3/c2, a4/c2, c2/z}
  {\draw[thick] (\t)--(\h);} 
\end{tikzpicture} 
\\
  $\bra\bra3\ket1\ket$ 
& $\bra\bra2\ket2\ket$ 
& $\bra1\bra2\ket1\ket$ 
& $\bra2\bra2\ket\ket$ 
& $\bra1\bra3\ket\ket$ \\
&&&&\\
&& 
\begin{tikzpicture}[scale=0.5]
\foreach \j in {1,2,3,4} {\coordinate (a\j) at (\j,1.5);}
\foreach \j in {1,2,3} {\coordinate (b\j) at (0.5+\j,1.0);}
\foreach \j in {1,2} {\coordinate (c\j) at (1.0+\j,0.5);}
\coordinate (z) at (2.5,0);
\foreach \j in {1,2,3,4} {\draw[thick] (a\j)--(z);} 
\end{tikzpicture} 
&& \\
&& $\bra4\ket$ &&
\end{tabular}
\end{center}
\caption{Bracketings with four inputs.}
\label{fig:bra4}
\end{figure}
These are well-known to correspond to the 0,1 and 2 dimensional 
cells of a pentagon.
Furthermore, the restriction of the differential on $\Bi\grd$ 
corresponds to the coboundary map on the cochain complex of the pentagon
(with some care needed over signs).

More generally, writing $\Bi_{n,k}(V)$
for the sum over all $(n-1)$-fold bracketings of $k$ inputs,
we have for $k\geq 2$,
\begin{equation}
\label{eq:ass} 
   \Bi_{n,k}(V) \isom V^k \otimes_\ZZ C^{k-n}(\ass_{k-2}),
\end{equation}
where $C^\bullet(\ass_{m})$ is the cochain complex of the 
$m$th associahedron (or Stasheff polytope, introduced in \cite{St63}).
In addition, the differential on $C^\bullet(\ass_{k-2})$ induces the differential on
$\Bi_{\bullet,k}(V)$.
Since the associahedra are all contractible, this means that 
the homology of $\Bi_{\bullet,k}(V)$ is just $V^k$ in degree $k$.
In other words, the kernel of $d$ restricted to $\Bi_{k,k}(V)$
is isomorphic to $\B_k(V)$ and the induced map
\begin{equation}
\label{eq:eta}
  \eta\colon \B\grd(V)\to \Bi\grd(V),
\end{equation}
is a quasi-isomorphism of $A_\infty$ coalgebras
(essentially because the $A_\infty$ operad resolves the associative operad).

Using Corollary~\ref{cor:qi=we} from the Appendix,
this is actually sufficient to prove the following result, 
but we will also prove it directly as a special case of 
our main result Theorem~\ref{thm:main} in Section~\ref{sec4}.
 
\begin{theorem}
\label{thm:V}
$\cobar{\Bi\grd(V)}\qiso V$.
\end{theorem}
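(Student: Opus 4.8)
The plan is to reduce the claim to the two facts already established in this section: that the comparison map $\eta\colon\B\grd(V)\to\Bi\grd(V)$ of \eqref{eq:eta} is a quasi-isomorphism of $A_\infty$-coalgebras, and that its source satisfies $\cobar\B\grd(V)=\AM\grd\qiso V$. Since the cobar construction is functorial, $\eta$ induces a morphism of dg algebras $\cobar\eta\colon\cobar\B\grd(V)\to\cobar{\Bi\grd(V)}$, obtained from the shift of $\eta$ by the universal property of the tensor algebra. The heart of the argument is that $\cobar\eta$ is again a quasi-isomorphism; this is precisely Corollary~\ref{cor:qi=we}, which asserts that cobar carries quasi-isomorphisms of (locally finite) $A_\infty$-coalgebras to quasi-isomorphisms (weak equivalences) of dg algebras. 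Chaining $V\qiso\cobar\B\grd(V)\qiso\cobar{\Bi\grd(V)}$ then yields $\cobar{\Bi\grd(V)}\qiso V$.

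Before invoking the corollary, two hypotheses must be checked, both immediate here. First, $\eta$ must be a genuine morphism of $A_\infty$-coalgebras and not merely a chain map; this holds by construction, reflecting the fact (noted after \eqref{eq:eta}) that the $A_\infty$ operad resolves the associative operad. Second, both $\B\grd(V)$ and $\Bi\grd(V)$ must be locally finite, so that their cobar constructions are honest tensor algebras rather than completions, to which Corollary~\ref{cor:qi=we} applies. The only real obstacle lies inside that corollary, namely proving that cobar preserves quasi-isomorphisms; I would expect this to be established by filtering the tensor algebra by the number of factors (equivalently, word length) and comparing associated graded complexes, where the comparison reduces to $\eta$ on the underlying complexes.

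There is also a more self-contained route, which is the special case of Theorem~\ref{thm:main} that the authors promise. Here one identifies $\cobar{\Bi\grd(V)}$ combinatorially: a homogeneous summand is a concatenation $V^{\beta_1}\otimes\cdots\otimes V^{\beta_r}$ of closed non-degenerate bracketings (elements of $\Brakc2$), that is, an ``open'' bracketing, and its total differential --- the internal part $\Delta_1$ deleting an inner bracket, together with the top-level higher coproducts $\Delta_k$ deleting an outer bracket --- amounts to removing a single pair of brackets. Fixing the number of inputs $n$, the degree-zero part is $\bigoplus_{r\geq1}V^r=\tensalg{S}{V}$, while for each $n\geq2$ the part built from summands $V^n$ takes the form $V^n\otimes_\ZZ C$, with $C$ the (shifted, augmented) cellular cochain complex of a contractible polytope assembled from the associahedra appearing in \eqref{eq:ass}. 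One then checks that $C$ is acyclic, so that the homology is $V$ concentrated in degree $0$, as required. In this route the main obstacle is identifying the correct polytope and pinning down the signs so that the two halves of the differential combine into the cellular coboundary --- exactly the bookkeeping that Theorem~\ref{thm:main} carries out in full generality.
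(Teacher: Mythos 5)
Your first route is legitimate, and it is in fact exactly the alternative the authors acknowledge in the sentence immediately preceding the theorem: $\eta$ of \eqref{eq:eta} is a quasi-isomorphism of $A_\infty$-coalgebras, $\cobar\B\grd(V)=\AM\grd\qiso V$ by the simplex computation earlier in Section~\ref{sec2}, and Corollary~\ref{cor:qi=we} upgrades $\eta$ to a weak equivalence, so the cobar constructions are quasi-isomorphic. But this is not the proof the paper gives, and two points need adjusting. First, the hypothesis of Corollary~\ref{cor:qi=we} is not local finiteness but compatibility of the morphism with \emph{strictly positive gradings} on source and target; here one takes the grading by number of inputs, which $\B\grd(V)$, $\Bi\grd(V)$ and $\eta$ all respect (this is what produces the admissible filtrations, i.e.\ cocompleteness, that the Appendix machinery requires). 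Second, and more seriously, the entire Appendix --- hence the Corollary --- carries the standing hypothesis that $S$ is a separable $k$-algebra over a field, whereas Theorem~\ref{thm:V} is stated in a context where $S$ is an arbitrary ring. So this route proves the statement only under that restriction, which is presumably why the authors prove the theorem instead as a special case of Theorem~\ref{thm:main}, whose argument is valid over any $S$.

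Your second route is where the genuine gap lies, and it also misattributes to Theorem~\ref{thm:main} a computation the paper never performs. Via Remark~\ref{rem:main3}, $\cobar{\Bi\grd(V)}=\AF\grd(V,I)$ for $I=V\gr2$, and the proof of Theorem~\ref{thm:main} involves no polytopes and no sign bookkeeping: it observes that $\rho$ of \eqref{eq:rho} is surjective and that $\ker\rho$ is the dg ideal obtained by also replacing the degree-zero part $\tensalg{S}{V}$ by $I$; this ideal carries the same explicit contracting homotopy $h$ as the free contractible dg algebra $\AF\grd(V)$ of Section~\ref{sec3}, namely the bracket-insertion \eqref{eq:Fhom}, $\tau\colon V^\beta\to V^{\bra\beta\ket}$ for $\beta$ not already closed (this is exactly why degree zero must be cut down to $I$: so that $h$ lands in $\bra I\ket$). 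The identity $dh+hd=\id$ then holds by the same combinatorial check as in Section~\ref{sec3}, and contractibility of $\ker\rho$ follows. This homotopy is the idea your sketch is missing: it disposes of the acyclicity of the fixed-$n$ subcomplex for all $n\geq2$ at once, so one never needs to recognize that subcomplex as the cellular cochain complex of a contractible polytope. As written, your route leaves that recognition as an unproven step, and it does not follow from \eqref{eq:ass}: that isomorphism concerns only closed bracketings (the cells of a single associahedron), while $\cobar{\Bi\grd(V)}$ is indexed by open concatenations of closed bracketings, a strictly larger complex.
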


More generally, suppose $A$ is a graded algebra, generated by $V$ in degree 1, 
so that $A=\tensalg{S}{V}/I$, where 
\[
  I=\bigoplus_{n\geq 2} R_n
\]
is a homogeneous ideal in $V^{(2)}$.
Then, generalising \eqref{eq:Kfree}, we define
\begin{equation}
\label{eq:BiVI}
  \Bi\grd(V,I) = \bigoplus_{\beta\in\Brakc2} (V,I)^\beta,
\end{equation}
where $(V,I)^\beta$ is obtained from $V^\beta$ 
by replacing every occurrence of an inner bracketed 
$\bra V^n \ket$ by $\bra R_n \ket$.
The $A_\infty$ coalgebra structure on $\Bi\grd(V,I)$ is defined, as for $\Bi\grd(V)$,
by tautological maps (with the appropriate sign), 
except for the component of the differential
corresponding to the removal of an inner bracket.
Such a component is induced by one of the two maps
\begin{gather*}
   V^a\otimes \bra R_n\ket \otimes V^b \to R_{n+a+b} , \\
  V^a\otimes \bra R_n\ket \otimes V^b \to V^{n+a+b} ,
\end{gather*}
depending on whether the domain is enclosed by matching brackets $\bra .. \ket$
or non-matching ones, e.g. $\ket .. \ket$.
The first map exists because $I$ is an ideal, while the second
is the composite of the first with the inclusion $R_{n+a+b}\subs V^{n+a+b}$.
This inclusion also gives the degree 2 component 
$\Delta_k\colon \bra R_k \ket \to V^k$.

Another special case of Theorem~\ref{thm:main} is then the following.

\begin{theorem}
\label{thm:homog}
$\cobar{\Bi\grd(V,I)}\qiso A$.
\end{theorem}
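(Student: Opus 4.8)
The plan is to show that the degree-zero quotient map $q\colon\tensalg{S}{V}\to A$ extends to a quasi-isomorphism $\cobar{\Bi\grd(V,I)}\to A$, where $A$ is given the zero differential. Every structure map of $\Bi\grd(V,I)$ preserves the total number of inputs: the components of $\Delta_1$ and the higher $\Delta_k$ only remove a bracket and redistribute, or multiply via the ideal. Hence $\cobar{\Bi\grd(V,I)}$ splits as a direct sum $\bigoplus_{N\geq1}C_N$ of subcomplexes, where $C_N$ collects the summands with exactly $N$ inputs, and correspondingly $A=\bigoplus_N A_N$ with $A_N=V^N/I_N$. It therefore suffices to prove that each $C_N$ is a resolution of $A_N$. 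In homological degree zero $C_N=V^N$ (the forest $(1,\dots,1)$), while the only degree-one summands are the $V^a\otimes\bra R_n\ket\otimes V^b$ with $a+n+b=N$ and $n\geq2$; on these the differential is the inclusion $\bra R_n\ket=R_n\subs V^n$ multiplied out, so its image is $\sum_{a+n+b=N}V^aR_nV^b=I_N$ and $H_0(C_N)=V^N/I_N=A_N$, as required.

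For the vanishing of $H_d(C_N)$ with $d>0$ I would filter $C_N$ by the number $r$ of tensor factors in each forest. The component $\Delta_1$ preserves $r$, whereas each higher coproduct splits one factor into $k\geq2$ and so strictly increases $r$; thus $F^pC_N=\{r\geq p\}$ is a bounded decreasing filtration by subcomplexes whose associated-graded differential is exactly $\Delta_1$ acting factor-by-factor. By the Künneth theorem the first page of the resulting spectral sequence is assembled from the single-factor $\Delta_1$-homology $h_m:=H_\bullet\bigl(\bigoplus_{\beta}(V,I)^\beta,\Delta_1\bigr)$, summed over closed bracketings $\beta$ of $m$ inputs, and the induced differential comes from the surviving higher coproducts. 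Writing $H=\bigoplus_m h_m$, the $E_1$-term is the underlying graded space $\tensalg{S}{H\grd[1]}$ of $\cobar H$, and the differential assembled across the pages is the cobar differential of $H$; equivalently, the homological-perturbation quasi-isomorphism $\Bi\grd(V,I)\qiso H$ together with Corollary~\ref{cor:qi=we} gives $\cobar{\Bi\grd(V,I)}\qiso\cobar H$ directly.

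This reduces the theorem to computing $H$ and showing $\cobar H\qiso A$. The single-factor complex is the \emph{relation-decorated associahedron}: by \eqref{eq:ass} its free analogue, with each $(V,I)^\beta$ replaced by $V^\beta$, is $V^m\otimes_\ZZ C^\bullet(\ass_{m-2})$, which is contractible onto the vertices; this is precisely the content of Theorem~\ref{thm:V} and, at the coassociative layer $\cobar H$, of the warm-up computation with simplices. Inserting the relations replaces each innermost $\bra V^n\ket$ by $\bra R_n\ket$, and when a bracket is removed it replaces the tautological map by one of the two ideal maps $V^a\otimes\bra R_n\ket\otimes V^b\to R_{n+a+b}$ or $\to V^{n+a+b}$; tracking these through the contraction of the associahedron yields $H$, and assembling $\cobar H$ should give acyclicity in positive degrees with $H_0=A_N$.

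The main obstacle is exactly this last step: controlling how the relation submodules $R_n\subs V^n$ propagate through successive bracket removals — in particular the bookkeeping forced by the matching-versus-non-matching bracket distinction and by the Koszul signs of \eqref{eq:d} and \eqref{eq:sign!} — so as to confirm that the relation-decorated associahedron complexes still assemble into a complex that is acyclic above degree zero and augments isomorphically onto $A_N$. The free case $I=V\gr{2}$, in which every $R_n=V^n$ and one recovers the contractible associahedra and simplices verbatim, is simultaneously the model for the argument and a reassuring consistency check.
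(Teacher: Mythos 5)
Your input-degree splitting is valid (it uses the homogeneity of $I$) and your computation of $H_0(C_N)=A_N$ is correct, but the entire content of the theorem is the vanishing of $H_d(C_N)$ for $d>0$, and that step is never established: the filtration by the number of tensor factors only trades the problem for computing the $\Delta_1$-homology $H$ of the ``relation-decorated associahedron'' complexes and then proving $\cobar H\qiso A$, a statement of essentially the same difficulty as the original one, and you yourself flag this as the main obstacle. Moreover, the tools you invoke to set up that reduction are not available at the stated level of generality: Theorem~\ref{thm:homog} is asserted over an arbitrary ring $S$, where $\otimes_S$ need not be exact, so $(V,I)^\beta$ need not even be a submodule of $V^\beta$, the K\"unneth identification of the $E_1$-page fails, homotopy transfer has no splittings to work with, and Corollary~\ref{cor:qi=we} is only proved in the Appendix under the hypothesis that $S$ is a separable $k$-algebra over a field. (Remark~\ref{rem:main1} makes exactly this point: in general one cannot argue by restriction from the free case.)

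The paper's proof, via Theorem~\ref{thm:main} and Remark~\ref{rem:main3}, bypasses all of this with one combinatorial idea that your outline is missing: enlarge the set of bracketings to allow open and degenerate ones, so that $\cobar{\Bi\grd(V,I)}$ is identified with the dg algebra $\AF\grd(V,I)$ of \eqref{eq:FVI}. The augmentation $\rho\colon\AF\grd(V,I)\to A$ of \eqref{eq:rho} is surjective, and its kernel $\AF'\grd(V,I)$, obtained by replacing the degree-zero part $\tensalg{S}{V}$ by $I$ as well, carries the explicit contracting homotopy ``enclose the whole word in an outer bracket'' of \eqref{eq:Fhom}: in positive degrees this is a tautological map, and in degree zero it is well defined precisely because an element of $I$ may be enclosed to give an element of $\bra I\ket$, which is an allowed summand. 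Hence $\ker\rho$ is contractible and $\rho$ is a quasi-isomorphism, for arbitrary $S$, with no spectral sequence, no K\"unneth, and no exactness hypotheses. This explicit contraction---made possible only after degenerate and open bracketings are admitted, which is why it is invisible in the $\Brakc2$ model you work with---is the missing ingredient your plan would need; the filtration bookkeeping cannot substitute for it.
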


\begin{example}
\label{ex:Koszul}
We look more closely at the case when $S$ is a separable $k$-algebra,
so that $\otimes$ is exact and hence $\Bi\grd(V,I)\subs \Bi\grd(V)$.
Now Theorem~\ref{thm:homog} implies 
(by Remark~\ref{rem:main3} and Proposition~\ref{pr:KFtor}, 
or directly from Proposition~\ref{prop:Tor})
that the degree $k$ part of $\Tor^A_n(S,S)$
is isomorphic to the homology of $d$ at $\Bi_{n,k}(V,I)$,
so this degree $k$ part vanishes for $k<n$, 
since there are no bracketings in this case.
Since $\Ext_A^n(S,S)\isom \Hom_S(\Tor^A_n(S,S),S)$,
this also starts in degree $n$ (cf. \cite[Lemma 2.1.2]{BGS}).

Observe further that all inner brackets in 
the bracketings that contribute to $\Bi_{n,n}(V)$ are $\bra2\ket$
(as for example in the top row of Figure~\ref{fig:bra4}).
Hence, under the embedding $\eta\colon \B\grd(V)\to \Bi\grd(V)$
of \eqref{eq:eta},
the kernel of $d$ restricted to $\Bi_{n,n}(V,I)$ is identified
with the familiar Koszul term $\B_n(V,R_2)\subs \B_n(V)$
(cf. \cite[\S 2.6]{BGS}), defined by
\[
 \B_n(V,R_2) = \bigcap_{p+q=n-2} V^p\otimes R_2\otimes V^q. 
\]
Thus, as is also well-known from Koszul theory 
(cf. \cite[Thm 2.6.1]{BGS}),
the degree $n$ part of $\Tor_n(S,S)$
is isomorphic to $\B_n(V,R_2)$.
Note also that $\B\grd(V,R_2)$ is a subcoalgebra of $\B\grd(V)$.

But now, if $\Ext^n_A(S,S)$, and hence $\Tor_n^A(S,S)$, is
concentrated in degree $n$, 
which is one characterisation of $A$ being Koszul
(cf. \cite[Prop 2.1.3]{BGS}), 
then necessarily $I=(R_2)$ and  
we also deduce that $\B\grd(V,R_2)\qiso\Bi\grd(V,I)$ 
and hence, by Corollary~\ref{cor:qi=we},
that $\cobar{\B\grd(V,R_2)}\qiso A$,
which is the algebra incarnation of the Koszul resolution for $A$.
\end{example}

\goodbreak\section{A free contractible dg algebra} \label{sec3}

We now develop the machinery that will enable us to generalise the
constructions behind Theorems~\ref{thm:V} and \ref{thm:homog}.
In the process, we are naturally led to consider bracketings of a slightly more
general form than in Section~\ref{sec2}.

For any ring $S$ and any $S,S$-bimodule $V$, we can construct 
a free contractible dg $S$-algebra
\[
  \AF\grd = \AF\grd(V) = \bigoplus_{n\geq0} \AF_n(V)
\]
that is `freely' generated by $V$ in degree 0 and 
a contracting homotopy $h\colon\AF\grd\to\AF\grd$ of degree 
$1$, satisfying $dh+hd=\id$ and $h^2=0$.
The differential $d$, of degree $-1$, 
is then determined recursively by the two conditions
\begin{eqnarray}
\label{eq:conhom}
  d(hv) &=& v - h(dv), \\
\label{eq:leftLeib}
  d(v\cdot w) &=& dv \cdot w + (-1)^{\deg v} v\cdot dw,
\end{eqnarray}
starting from $dv=0$ for $v\in V$.
Notice that (with these signs) we deduce inductively that $d^2=0$,
by observing that
\[
   d^2hv = dv - dhdv = dv - (dv - hd^2v) = h d^2v
\]
and
\begin{gather*}
  d^2(v\cdot w) = d\bigl(dv \cdot w + (-1)^{\deg v} v\cdot dw\bigr) \\
   = d^2v \cdot w + (-1)^{\deg dv} dv\cdot dw 
   + (-1)^{\deg v} dv\cdot dw + v\cdot d^2 w \\
   = d^2v \cdot w + v\cdot d^2 w,
\end{gather*}
since $\deg(dv)=(\deg v) - 1$.

We may describe $\AF\grd$ more explicitly
using an extended notion of bracketed tensor products, 
similar to Section~\ref{sec2},
\begin{equation}
\label{eq:AF}
 \AF\grd(V) = \bigoplus_{\beta\in\Brako1} V^\beta,
\end{equation}
where, if $\beta$ is a $d$-fold
bracketing of $n$ inputs,
then $V^\beta$ is $V^n$ and is in homological degree $d$,
i.e. is a summand of $\AF_d$.
Note that this degree is different from \eqref{eq:Kfree},
because we are not describing an $A_\infty$-coalgebra here,
but its cobar construction directly.

We define the set $\Brako1$ of bracketings recursively as follows,
noting that each such bracketing is, in the first instance, a word in 
the symbols ``$\inpt$'' (representing an input), ``\bra'' and ``\ket'', 
with the later two balanced in the usual way of brackets:
\begin{enumerate}
\item[(i)] $\Brako1$ contains $\inpt$,
\item[(ii)] if $\beta\in\Brako1$ and $\beta\neq \bra\alpha\ket$, for some 
$\alpha\in\Brako1$, then $\bra\beta\ket\in\Brako1$,
\item[(iii)] if $\alpha,\beta\in\Brako1$, then their concatenation 
$\alpha\beta\in\Brako1$
\end{enumerate}
Note that, in contrast to $\Brakc2$ in \eqref{eq:Kfree}, bracketings in $\Brako1$
may be open, i.e. without an outer bracket, and degenerate, 
i.e. with only one input in an inner bracket.

As before, we abbreviate a sequence of $n$ uninterrupted $\inpt$'s 
by ``$n$''.
Thus, for example, $\bra\inpt\inpt\ket\bra\inpt\inpt\bra\inpt\ket\ket$ becomes
$\bra2\ket\bra2\bra1\ket\ket$.
In particular, there are just two bracketings  $1$ and $\bra1\ket$ with one input
and eight with two inputs
\[
2,\,\bra2\ket
,\,
\bra1\ket 1,\,
1\bra1\ket,\,
\bra\bra1\ket 1\ket,\,
\bra1\bra1\ket\ket,\,
\bra1\ket \bra1\ket,\,
\bra\bra1\ket \bra1\ket\ket.
\]

Now the product in $\AF\grd$ has non-zero components consisting of the 
tautological maps
\[
  \tau\colon V^\alpha\otimes V^\beta \to V^{\alpha\beta},
\]
while the contracting homotopy $h\colon\AF\grd\to\AF\grd$ has non-zero 
components given by the tautological maps
\begin{equation}
\label{eq:Fhom}
  \tau\colon V^\beta \to V^{\bra\beta\ket},
 \quad\text{for each $\beta\neq\bra\alpha\ket$,}
\end{equation}
that is, $\tau$ is the identity map $V^k\to V^k$,
where $k$ is the number of inputs in $\beta$ (and in $\bra\beta\ket$).

On the other hand, the differential $d\colon\AF_n\to\AF_{n-1}$
has $n$ non-zero components
\begin{equation}
\label{eq:Fdiff}
  (-1)^{m-1}\tau\colon V^\beta \to V^{\beta(\widehat{m})},
 \quad\text{for $m=1,\dots,n$,}
\end{equation}
where $\beta(\widehat{m})$ is obtained from $\beta$ by removing the 
$m$th ``$\bra$'' from the left, together with its matching ``$\ket$''.
It is straightforward to check that this does give a differential satisfying
\eqref{eq:conhom} and \eqref{eq:leftLeib},
noting that we count ``$\bra$''s from the 
left, because \eqref{eq:leftLeib} is a `left' Leibniz rule.
Thus $h$ is a contracting homotopy and 
so $\AF\grd$ is a contractible dg algebra.

\begin{remark}
\label{rem:AFcoalg}
Forgetting the differential $d$ and the contracting homotopy $h$,
$\AF\grd$ is the free graded $S$-algebra generated by 
\begin{equation}
\label{eq:coalgKV}
 \KF\grd(V)[1] = V\oplus h(\AF\grd) = \bigoplus_{\beta\in\Brakc1} V^\beta,
\end{equation}
where $\Brakc1\subs\Brako1$ is the set of closed bracketings.
Note that the convention that ``1'' is a closed bracketing is precisely
to get the initial summand $V$ here.

In other words, $\AF\grd=\cobar \KF\grd(V)$.
The differential and coproducts on the $A_\infty$-coalgebra 
$\KF\grd(V)$ are determined by applying \eqref{eq:d}
in reverse and one can check that they are given explicitly
by the same rules as those of $\Bi\grd(V)$
in the paragraphs following \eqref{eq:Kfree}.
In particular, this explains the choice of sign in \eqref{eq:sign!}.
\end{remark}

\goodbreak\section{Main theorem} \label{sec4}

\newcommand{\str}{*}

Note that $\AF_0(V)=\tensalg{S}{V}$ and, furthermore, that we have 
many other copies of $\tensalg{S}{V}$ `embedded' in $\AF\grd$ 
for every pair of inner brackets. 
To make this explicit, we may introduce a new symbol $\bra\str\ket$ with the meaning
\[
  V^{\bra\str\ket} = \bigoplus_{m\geq1} V^{\bra m\ket} = \bra \tensalg{S}{V} \ket
\]
and more generally
\[
  V^{\alpha\bra \str\ket\beta}  = \bigoplus_{m\geq1} V^{\alpha\bra m\ket\beta} 
  = V^{\alpha} \otimes \bra \tensalg{S}{V} \ket \otimes V^{\beta},
\]
for matching partial bracketings $\alpha,\beta$.
Thus we can define contracted sets $\Brakc\str$ of closed bracketings and $\Brako\str$ of open bracketings,
in which the innermost brackets are all  $\bra\str\ket$, 
and so that we can write
\begin{equation}
\label{eq:AFstr}
 \AF\grd(V) = \bigoplus_{\beta\in\Brako\str} V^\beta,
 \qquad
 \KF\grd(V) = \bigoplus_{\beta\in\Brakc\str} V^\beta,
\end{equation}
where the degrees of terms in the 2nd equation
are shifted compared to \eqref{eq:coalgKV}.
Now, for any ideal $I\subs\tensalg{S}{V}$ we can define a new
dg algebra 
\begin{equation}
\label{eq:FVI}
\AF\grd(V,I)= \bigoplus_{\beta\in\Brako\str} (V,I)^\beta,
\end{equation}
where $(V,I)^\beta$ is obtained from $V^\beta$ by replacing 
$\bra \tensalg{S}{V}\ket$ by $\bra I \ket$, for each occurrence of $\bra\str\ket$ in $\beta$.
Because $I$ is an ideal, there is a well-defined differential 
$d\colon \AF\grd(V,I)\to\AF\grd(V,I)$
given by the same rule \eqref{eq:Fdiff} as the differential 
$d\colon \AF\grd(V)\to\AF\grd(V)$.
We can not quite define a contracting homotopy in the same way,
but simply because we have not replaced $\AF_0(V)=\tensalg{S}{V}$ by $I$.

Indeed, the image of $d\colon \AF_1(V,I)\to\AF_0(V,I)$ is
$I\subs \tensalg{S}{V}$ and hence there is a dg morphism 
\begin{equation}
\label{eq:rho}
  \rho\colon \AF\grd(V,I)\to \tensalg{S}{V}/I,
\end{equation}
where the codomain here is just an algebra concentrated in degree 0.

\begin{theorem}
\label{thm:main}
The map $\rho$ in \eqref{eq:rho} is a quasi-isomorphism, that is, 
the dg algebra $\AF\grd(V,I)$ in \eqref{eq:FVI}
is a free resolution of the algebra $A=\tensalg{S}{V}/I$. 
\end{theorem}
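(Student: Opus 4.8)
The plan is to prove the theorem by computing the homology of the complex $\AF\grd(V,I)$ directly, exploiting the contracting homotopy $h$ of the ambient algebra $\AF\grd(V)$. In degree $0$ there is nothing to do: since $\AF_0(V,I)=\tensalg{S}{V}$ and, as already noted, the image of $d\colon\AF_1(V,I)\to\AF_0(V,I)$ is exactly $I$, the map $\rho$ realises the identification $H_0\bigl(\AF\grd(V,I)\bigr)=\tensalg{S}{V}/I=A$. Thus the entire content of the statement is the vanishing of $H_n\bigl(\AF\grd(V,I)\bigr)$ for $n\geq1$.

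For this vanishing I would show that, although the homotopy $h$ of \eqref{eq:Fhom} does not restrict to a contracting homotopy on all of $\AF\grd(V,I)$, it does restrict on the positive-degree part, so that the relation $dh+hd=\id$ continues to hold on $\AF_n(V,I)$ for every $n\geq1$. The key observation is that $h$ simply adjoins an outer bracket, $V^\beta\to V^{\bra\beta\ket}$, which leaves every innermost bracket---and hence the requirement that its contents lie in $I$---untouched; consequently $h$ carries $\AF_n(V,I)$ into $\AF_{n+1}(V,I)$ whenever $n\geq1$. Granting the relation, any cycle $x\in\AF_n(V,I)$ with $n\geq1$ satisfies $x=dh(x)+hd(x)=d\bigl(h(x)\bigr)$ and so is a boundary, giving $H_n=0$ and completing the proof that $\rho$ is a quasi-isomorphism.

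The one point that requires care---and the main, if modest, obstacle---is the interface between degrees $1$ and $0$, which is precisely where the homotopy of $\AF\grd(V)$ fails to preserve $\AF\grd(V,I)$, since it would send $\tensalg{S}{V}=\AF_0$ into $\bra\tensalg{S}{V}\ket$ rather than into $\bra I\ket$. To check $dh+hd=\id$ on $\AF_1(V,I)$ I would use that for $x\in\AF_1(V,I)$ the element $d(x)$ lies in the image of $d\colon\AF_1(V,I)\to\AF_0(V,I)$, which is $I$; hence $h\bigl(d(x)\bigr)=\bra d(x)\ket$ does land in $\bra I\ket\subs\AF_1(V,I)$, so that both $dh(x)$ and $hd(x)$ are computed inside the subcomplex and the identity inherited from $\AF\grd(V)$ holds there. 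The remaining ingredients are already in place: the differential $d$ preserves $\AF\grd(V,I)$ because $I$ is an ideal---opening an innermost bracket back into its surrounding word keeps the content in $I$---while $dh+hd=\id$ and $h^2=0$ hold on $\AF\grd(V)$ by the recursive computation following \eqref{eq:leftLeib}. Restricting these relations to positive degrees yields the acyclicity, which together with the degree-$0$ computation shows that $\rho$ is a quasi-isomorphism.
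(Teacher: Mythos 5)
Your proof is correct and is essentially the paper's own argument: both hinge on the fact that the bracket-adjoining homotopy $h$ carries $I\subs\AF_0$ into $\bra I\ket\subs\AF_1(V,I)$, so that the homotopy identity $dh+hd=\id$ survives wherever it is needed. The paper merely packages this differently --- it observes that $\ker\rho$ is the subcomplex $\AF'\grd(V,I)$ obtained by replacing $\AF_0(V,I)=\tensalg{S}{V}$ by $I$, on which $h$ is defined everywhere and gives contractibility outright --- whereas you verify the identity on positive degrees and compute $H_0=\tensalg{S}{V}/I$ separately; the mechanism, including your careful treatment of the degree $1$ to degree $0$ interface, is identical.
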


\begin{proof}
Because $\rho$ is surjective, we just want to show that $\ker\rho$ is
contractible. 
For this, we observe that $\ker\rho$
is the dg ideal
\begin{equation}
  \AF'\grd(V,I)\subs\AF\grd(V,I)
\end{equation}
obtained by also replacing $\AF_0(V)$ by $I$.
Then, the contracting homotopy $h$ can be defined on 
$\AF'\grd(V,I)$ by the rule \eqref{eq:Fhom} as it was on $\AF\grd(V)$
and so we can use it to deduce that $\AF'\grd(V,I)$
is contractible, as required.
\end{proof}

\begin{remark}
\label{rem:main1}
In some cases, e.g. when $S$ is a separable $k$-algebra
(so that $\otimes$ is exact), we note that
$\AF\grd(V,I)$ is a sub-dg-algebra of $\AF\grd(V)$,
and so the equations $d^2=0$, for  $\AF\grd(V,I)$, and $dh+hd=\id$, for $\AF'\grd(V,I)$,
follow by restriction from $\AF\grd(V)$.
However, in general, they follow rather because the combinatorial structure of 
$\AF\grd(V,I)$ is identical to that of $\AF\grd(V)$.
\end{remark}

\begin{remark}
\label{rem:main3}
Just as $\AF\grd(V)=\cobar \KF\grd(V)$, as in Remark~\ref{rem:AFcoalg},
we have $\AF\grd(V,I)=\cobar \KF\grd(V,I)$,
where 
\begin{equation}
\label{eq:KVI}
  K\grd(V,I) = \bigoplus_{\beta\in\Brakc\str} (V,I)^\beta,
\end{equation}
with $(V,I)^\beta$ now in homological degree $d$ when $\beta$ has $d-1$ pairs of brackets.

In particular, when $I=V\gr2$, we have $K\grd(V,I)=\Bi\grd(V)$ from \eqref{eq:Kfree},
so that Theorem~\ref{thm:main} yields Theorem~\ref{thm:V}.
Further, for a homogeneous ideal $I\subs V\gr2$, we have $K\grd(V,I)=\Bi\grd(V,I)$ from \eqref{eq:BiVI},
so that Theorem~\ref{thm:main} yields Theorem~\ref{thm:homog}.
\end{remark}

\goodbreak\section{Homological application} \label{sec6}

Here we restict to the case when 
$S$ is a separable $k$-algebra over a field $k$
and  $V$ is an $S,S$-bimodule over $k$,
i.e. the $S,S$-action on $V$ factors through
$S \otimes_k S^{\operatorname{op}}$.

Note that $S$ is a left and right $A$-module, with $A$ acting trivially,
and we would expect (in good cases) that we could use an
$A_\infty$-coalgebra $K\grd$ with $\cobar K\grd \simeq A$
to compute the (positive) $\Tor$-groups of $S$ and hence, by duality,
its $\Ext$ groups. 
We observe that the case in hand, 
with $K\grd=K\grd(V,I)$ as in \eqref{eq:KVI},
is a good one.

\begin{proposition}
\label{pr:KFtor}
If $A=\tensalg{S}{V}/I$, then
we have an isomorphism
\[
  H(\KF\grd(V,I))\cong \Tor\grd^{A}(S,S)
\]
of graded coalgebras.
\end{proposition}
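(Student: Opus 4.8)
The plan is to combine the explicit free resolution of Theorem~\ref{thm:main} with bar--cobar duality. Write $R=\AF\grd(V,I)$ and $\Lambda=S\oplus R$; by Theorem~\ref{thm:main} and Remark~\ref{rem:main3} there is a quasi-isomorphism $\rho\colon R\qiso A$ of dg $S$-algebras, and $R=\cobar\KF\grd(V,I)$ is quasi-free, meaning that as a graded algebra it is the tensor algebra $\tensalg{S}{W}$ on the generators $W=\KF\grd(V,I)[1]$. Since $S$ is separable, $\otimes$ is exact, so all the bimodules in sight are flat and derived tensor products over the dg algebra are invariant under $\rho$; in particular $\Tor\grd^A(S,S)\isom H\bigl(S\otimes^L_\Lambda S\bigr)$ may be computed from $R$ in place of $A$.

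To fix the underlying graded bimodule, and to make the degree shift transparent, I would first run the standard homological computation for a quasi-free algebra. Because $R=\tensalg{S}{W}$ is free, $\Lambda$ admits the length-one dg bimodule resolution
\[
 0\to \Lambda\otimes W\otimes\Lambda \to \Lambda\otimes\Lambda \to \Lambda\to 0,
\]
the middle map being multiplication and the left map $a\otimes w\otimes b\mapsto aw\otimes b-a\otimes wb$, made into a map of dg bimodules by the derivation property of $d$. Applying $S\otimes_\Lambda(-)\otimes_\Lambda S$ collapses this to the two-term complex $W\to S$ in which the horizontal map vanishes, since $W$ lies in the augmentation ideal. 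The only surviving internal differential on $W$ is the linearisation of $d|_W$: every component of $dw$ landing in a product of at least two generators is killed once both outer tensor factors are augmented, so what remains is precisely the generator-to-generator component, which by \eqref{eq:d} and Remark~\ref{rem:AFcoalg} is the $A_\infty$-differential $\Delta_1$ (up to sign). The homological shift coming from the resolution degree cancels the shift in $W=\KF\grd(V,I)[1]$, and so I obtain the graded bimodule isomorphism $H(\KF\grd(V,I))\isom\Tor\grd^A(S,S)$, with $S$ sitting in degree $0$ and both sides concentrated in positive degrees thereafter.

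The substance of the proposition is to upgrade this to an isomorphism of graded \emph{coalgebras}. On the one side, $\Tor\grd^A(S,S)$ carries its usual coalgebra structure as the homology of the bar construction $\operatorname{Bar}(A)$, a dg coalgebra under deconcatenation whose homology computes $\Tor$; on the other, $H(\KF\grd(V,I))$ is a graded coalgebra because $\Delta_1$ is a coderivation for $\Delta_2$, so $\Delta_2$ is a chain map and descends to homology, where the higher $A_\infty$ relations force strict coassociativity. To match the two, I would use bar--cobar adjunction: the quasi-isomorphism $\rho$ is the same data as a twisting morphism $\KF\grd(V,I)\to A$, hence corresponds to a morphism of dg coalgebras $g\colon\KF\grd(V,I)\to\operatorname{Bar}(A)$, which one checks refines the identification of the previous paragraph on indecomposables. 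Equivalently, $g$ factors as the unit $\KF\grd(V,I)\to\operatorname{Bar}(\cobar\KF\grd(V,I))=\operatorname{Bar}(R)$ followed by $\operatorname{Bar}(\rho)\colon\operatorname{Bar}(R)\to\operatorname{Bar}(A)$. Passing to homology, $H(g)$ is the required coalgebra map, and it is an isomorphism once $g$ is shown to be a quasi-isomorphism.

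The main obstacle is exactly this last point, together with keeping the coproduct conventions aligned. Showing $g$ is a quasi-isomorphism amounts to the two halves of bar--cobar duality in our setting: that the unit $\KF\grd(V,I)\to\operatorname{Bar}(\cobar\KF\grd(V,I))$ is a quasi-isomorphism, which relies on the conilpotency (``local finiteness'') of $\KF\grd(V,I)$, and that $\operatorname{Bar}$ sends the quasi-isomorphism $\rho$ to a quasi-isomorphism, which is where the flatness from separability of $S$ is used and where I expect to invoke Corollary~\ref{cor:qi=we}. The remaining care is bookkeeping: verifying that the sign in \eqref{eq:d} and the Koszul signs of the bar coproduct are compatible, so that $H(g)$ intertwines $\Delta_2$ with the deconcatenation coproduct on the nose rather than only up to a unit.
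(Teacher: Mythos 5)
Your overall architecture is the paper's: Theorem~\ref{thm:main} supplies the quasi-isomorphism $\cobar\KF\grd(V,I)\to A$, and bar--cobar duality over the separable ring $S$ converts this into an identification of $H(\KF\grd(V,I))$ with $H(BA)\cong\Tor\grd^A(S,S)$ as graded coalgebras. The paper packages the duality step as Proposition~\ref{prop:Tor}, proved from the roof of filtered quasi-isomorphisms $C\leftarrow B\Omega_\infty C\to BA$ of Lemma~\ref{filteredqis}; your composite $\KF\grd(V,I)\to B(\cobar\KF\grd(V,I))\to B(A)$ is the same argument with the bar--cobar resolution map oriented the other way, so you are in effect re-deriving Proposition~\ref{prop:Tor} rather than citing it. Your preliminary additive computation --- the small dg bimodule resolution of the quasi-free algebra and the observation that only the linearised differential survives after applying $S\otimes_\Lambda(-)\otimes_\Lambda S$ --- is correct and is a genuinely different way to pin down the underlying graded bimodule, though it is superseded once the coalgebra-level comparison is in place.

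The genuine gap is in the hypothesis you feed into the duality. For the unit $\KF\grd(V,I)\to B(\cobar\KF\grd(V,I))$ to be a quasi-isomorphism you need $\KF\grd(V,I)$ to be cocomplete in the sense of the Appendix, i.e.\ to admit an admissible filtration, and you justify this by appeal to ``conilpotency (local finiteness)''. These are not the same thing: local finiteness --- for each $x$ only finitely many $\Delta_n(x)$ are non-zero --- is the weaker condition that merely makes $\cobar$ well defined on the direct sum, and it does not imply cocompleteness. Moreover, cocompleteness of $\KF\grd(V,I)$ is not formal: the natural admissible filtration on $\KF\grd(V)$ (by number of inputs, which all coproducts respect) does not descend to $\KF\grd(V,I)$ when $I$ is inhomogeneous, since then $(V,I)^\beta$ carries no compatible positive grading. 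This verification is precisely the content of the paper's proof: it exhibits the admissible filtration on $\KF\grd(V)$ and then uses separability of $S$ to see that $\KF\grd(V,I)$ is an $A_\infty$-sub-coalgebra of $\KF\grd(V)$, hence inherits an admissible filtration by intersection --- which is also where separability genuinely enters, not only in the flatness used for your Tor-invariance step. A smaller slip: Corollary~\ref{cor:qi=we} is not the right tool for showing the bar construction carries $\rho$ to a quasi-isomorphism (that is Lemma~\ref{filteredqis}(1)), and it could not be applied to $\KF\grd(V,I)$ anyway, for the same reason that this coalgebra has no strictly positive grading compatible with all the coproducts when $I$ is inhomogeneous.
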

\begin{proof}
The $A_\infty$-coalgebra $\KF\grd(V)$ is cocomplete; e.g. the
filtration given by
\[
  \KF\grd(V)(i) = \bigoplus_{\beta\in\Brakc1, |\beta|\leq i} V^\beta
\]
is admissible (see Appendix for definition).
Since $S$ is a separable $k$-algebra, 
$\KF\grd(V,I)$ is a $A_\infty$-sub-coalgebra of $\KF\grd(V,I)$
and is therefore also cocomplete.
Since Theorem~\ref{thm:main} tells us that
$\AF\grd(V,I)=\cobar\KF\grd(V,I)$ is a dg resolution of $A$,
we deduce from Proposition~\ref{prop:Tor} that
$H(\KF\grd(V,I))\cong \Tor^{A}(S,S)$ as graded coalgebras.
\end{proof}

It follows that $\Hom_S(\KF\grd(V,I),S)$ 
is an $A_\infty$-algebra whose cohomology
is 
$\Ext^\bullet_{A}(S,S)$, in positive degrees.

\goodbreak\section{Appendix} \label{appendix}

Again we restrict to the case where $S$ is a separable $k$-algebra over a field $k$.
We extend part of the bar-cobar formalism for dg algebras and cocomplete dg coalgebras 
over $S$ to cocomplete $A_\infty$-coalgebras, following \cite{Lef}, $\cite{Ke1}$ and $\cite{Ke0}$. 
As a change of notation, we write $\Omega_\infty C$ for the cobar construction of an 
$A_\infty$-coalgebra $C$. 
This coincides with the more classical cobar construction $\Omega C$,
in the special case that $C$ is a dg coalgebra.
The bar construction of a dg algebra $A$ is denoted $BA$.

A \emph{filtration} on an $A_\infty$-coalgebra $C$ is a sequence $C(0)\subseteq C(1) \subseteq \ldots\subseteq C$ of graded sub-bimodules such that for all $n\geq 1$, the
coproduct $\Delta_n:C \to C^n$ is compatible with the (induced) filtrations on $C$ and $C^n$. A filtration is \emph{admissible} if $C(0)=0$ and $C= \cup C(i)$. We say $C$ is \emph{cocomplete} if it supports an admissible filtration; this is equivalent to the usual definition if $C$ is a dg coalgebra.
See \cite[\S 9.3, Remark]{Po} for the related notion of conilpotent (curved) $A_\infty$-coalgebra.

A morphism $f=(f_i):C'\to C$ between $A_\infty$-coalgebras is a \emph{quasi-isomorphism} 
if $f_1$ is a quasi-isomorphism of the underlying complexes of $C'$ and $C$, 
and a \emph{weak equivalence} if the morphism 
$\Omega_\infty f: \Omega_\infty C'\to\Omega_\infty C$
of cobar constructions is a quasi-isomorphism of dg algebras.
A morphism $f:C'\to C$ between cocomplete $A_\infty$-coalgebras is a
\emph{filtered quasi-isomorphism} if admissible filtrations can be chosen 
on $C'$ and on $C$ so that the maps $f_i:C' \to C^i$ are 
compatible with the (induced) filtrations on $C'$ and $C^i$, and the morphism
$\AG(f_1): \AG(C)\to \AG(C')$ induced by $f_1$ is a quasi-isomorphism. 
Note that a filtered quasi-isomorphism is a quasi-isomorphism.

For any dg algebra $A$, the classical bar-cobar resolution
gives a canonical morphism
$\varepsilon_A\colon \Omega_\infty B A \to A$ of dg algebras.
Taking $A=\Omega_\infty C$ for an $A_\infty$-coalgebra $C$, 
we obtain a corresponding map $\epsilon_C: B\Omega_\infty C \to C$
of $A_\infty$-coalgebras.
In fact, $\epsilon_C$ is a weak equivalence, 
because $\varepsilon_A$ is a quasi-isomorphism
(see, e.g. \cite[Lemme 1.3.2.3(b)]{Lef}),
but we can say more if $C$ is cocomplete.

\begin{lemma}
\label{filteredqis} 
\begin{enumerate}
\item If $A\to A'$ is a quasi-isomorphism of dg algebras, 
then the induced morphism $BA\to BA'$ is a filtered quasi-isomorphism.
\item If $C$ is a cocomplete $A_\infty$-coalgebra, 
then $\epsilon_C:B\Omega_\infty C \to C$ is a filtered quasi-isomorphism.
\end{enumerate}
\end{lemma}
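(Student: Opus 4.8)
The plan for part (1) is to filter the bar constructions by word length and conclude with Künneth. Writing $BA=\bigoplus_{p\geq1}(A[1])^{\otimes p}$, I filter by $F_iBA=\bigoplus_{p\leq i}(A[1])^{\otimes p}$; this is an admissible coalgebra filtration, since deconcatenation preserves total word length and $F_0=0$. The multiplication part of the bar differential lowers $p$ by one, so it vanishes on the associated graded, leaving $\AG(BA)=\bigoplus_p(A[1])^{\otimes p}$ with the internal tensor differential alone. A quasi-isomorphism $f\colon A\to A'$ then induces $\bigoplus_p(f[1])^{\otimes p}$ on associated graded. Because $S$ is a separable $k$-algebra, $\otimes=\otimes_S$ is exact and the Künneth theorem holds over $S$, so each $(f[1])^{\otimes p}$ is a quasi-isomorphism; hence $\AG(Bf)$ is a quasi-isomorphism and $Bf$ is a filtered quasi-isomorphism.

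For part (2), I would transport the admissible filtration $C(i)$ of the cocomplete coalgebra $C$ to $B\Omega_\infty C$. Each generator of $B\Omega_\infty C$ is a bar word whose blocks are cobar words in (a shift of) $C$; to it I assign the degree $\Phi$ obtained by summing, over all the $C$-letters it contains, their levels in the filtration $C(\bullet)$. Since $C(0)=0$ forces every letter to have level $\geq1$, and each $\Delta_n$ is filtered on $C$, this $\Phi$ is an admissible filtration of $B\Omega_\infty C$; moreover the linear part $(\epsilon_C)_1$, the projection onto the summand $\cong C$ indexed by a single block containing a single letter, is filtered for $\Phi$ on the source and $C(i)$ on the target (and one checks the higher $(\epsilon_C)_i$ are filtered in the same way). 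The crucial point is that on passing to $\Phi$-associated graded the higher coproducts $\Delta_n$ ($n\geq2$) contribute only their filtration-preserving parts, namely the induced coproducts $\overline{\Delta}_n$ on $\AG(C)$. This yields an identification of complexes $\AG(B\Omega_\infty C)\cong B\Omega_\infty(\AG(C))$ under which $\AG((\epsilon_C)_1)=(\epsilon_{\AG(C)})_1$, reducing part (2) to showing that this last map is an ordinary quasi-isomorphism.

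To prove that, I would use that $B\Omega_\infty(\AG(C))$ splits as a direct sum of complexes, one for each value $\phi_0$ of $\Phi$, within which the number $N$ of $C$-letters is bounded by $\phi_0$. On each such finite piece I filter by $N$. The genuine cobar differential (an application of some $\overline{\Delta}_n$, $n\geq2$) strictly raises $N$, so the associated graded retains only the internal differential and the bar differential that concatenates adjacent blocks. For a fixed string of $N$ letters the block structures are indexed by the subsets of the $N-1$ gaps, and concatenation deletes a separator; this is exactly the augmented simplicial chain complex of the simplex on $N-1$ vertices, which is acyclic for $N\geq2$. Tensoring with the letters and invoking Künneth over $k$ once more, the associated graded is acyclic for $N\geq2$ and equals $(\AG(C),\overline{\Delta}_1)$ for $N=1$. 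The (finite) spectral sequence thus collapses onto $N=1$ with value $H(\AG(C))$, and $(\epsilon_{\AG(C)})_1$ realises this identification, so it is a quasi-isomorphism; hence $\epsilon_C$ is a filtered quasi-isomorphism.

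The steps demanding most care are the sign conventions that make the block-concatenation differential agree with the simplicial boundary, and the verification that the full morphism $\epsilon_C$, not merely its linear part, is filtered for $\Phi$. The conceptual heart, and the step I expect to be the main obstacle, is the acyclicity computation together with the identification $\AG(B\Omega_\infty C)\cong B\Omega_\infty(\AG(C))$: it is here that cocompleteness is indispensable, since the $\Phi$-grading is precisely what keeps $N$ bounded on each summand and so guarantees convergence. Without it the relevant filtration would be infinite in each total degree, and the collapse argument would break down.
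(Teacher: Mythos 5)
Your proposal is correct and follows essentially the same route as the paper's proof: the word-length (primitive) filtration for part (1), and for part (2) the filtration on $B\Omega_\infty C$ induced from the admissible filtration on $C$, with admissibility ($C(0)=0$) used to bound the number of $C$-letters by the filtration level, followed by a second filtration by letter count whose graded pieces carry only the internal and block-concatenation differentials and are acyclic. The only cosmetic differences are that the paper contracts those graded pieces by an explicit homotopy (splitting the first letter off the first block, vanishing when the first block is a single letter) rather than by identifying them with the augmented chain complex of a simplex and invoking K\"unneth, and it argues directly on the finite filtration rather than through a spectral sequence.
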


\begin{proof}
The induced morphism $BA\to BA'$ is a
filtered quasi-isomorphism with respect to the
primitive filtrations of $BA$ and $BA'$.
For the second statement,
equip $B\Omega_\infty C$ with the admissible filtration induced by a given admissible filtration on $C$. Then $\epsilon_C:B\Omega_\infty C\to C$ is filtered, 
and it suffices to show that
$\AG_i((\epsilon_C)_1):
\AG_i(B\Omega_\infty C)\to \AG_i(C)$
is a quasi-isomorphism for all $i\geq 1$.
Put $W=C[1]$, so that 
$$B\Omega_\infty C=T^+\left(T^+(W)[-1]\right)$$
Since the filtration on $C$ is admissible, $\AG_i(C^j)=0$ if $j>i$.
Equip 
\[
  \AG_i\left(B\Omega_\infty C\right)
 =\AG_i\left(\bigoplus_{i_1+\ldots+i_k\leq i}
 W^{i_1}[-1]\otimes\ldots \otimes W^{i_k}[-1]\right)
\]
with the filtration
\[
  F_l=\AG_i\left(\bigoplus_{i+1-l\leq i_1+\ldots+i_k\leq i}
  W^{i_1}[-1]\otimes\ldots
 \otimes W^{i_k}[-1]\right),\quad l\geq 0.
 \]
 Then 
 $\AG_i((\epsilon_C)_1)$ is a surjective map of complexes with
 kernel $F_{i-1}$,
  and, for each $1 \leq l \leq i-1$, the subquotient complex
  \[
  F_l/F_{l-1} = 
  \bigoplus_{i_1+\ldots+i_k = i+1-l }
  W^{i_1}[-1]\otimes\ldots
 \otimes W^{i_k}[-1]
 \]
 is acyclic, with
a contracting homotopy vanishing on components with $i_1=1$ and given by isomorphisms
\[
W^{i_1}[-1]\otimes\ldots \otimes W^{i_k}[-1]
\rightarrow
W[-1]\otimes W^{i_1-1}[-1]
\otimes W^{i_2}[-1]
\otimes\ldots \otimes W^{i_k}[-1]
\]
otherwise.
\end{proof}

\begin{lemma} 
\label{cocompleteisos} 
Let $f:C'\to C$ be a morphism of cocomplete $A_\infty$-coalgebras.
\begin{enumerate}
\item If $f$ is a filtered quasi-isomorphism, then it is a weak equivalence.
\item If $f$ is a weak equivalence, then it is a quasi-isomorphism.
\end{enumerate}
\end{lemma}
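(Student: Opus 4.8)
The plan is to prove the two implications by quite different means: statement (2) will follow formally from the bar--cobar formalism already assembled, whereas (1) carries the genuine homological content and requires a filtered comparison argument. In particular, (2) does not need (1); both rest only on Lemma~\ref{filteredqis} together with the observation that a filtered quasi-isomorphism is a quasi-isomorphism.

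For (2), I would use the naturality of the counit $\epsilon$. Since $f$ is a morphism of $A_\infty$-coalgebras, the identity $f\circ\epsilon_{C'}=\epsilon_C\circ B\Omega_\infty f$ holds, and on linear parts it reads
\[
  f_1\circ(\epsilon_{C'})_1=(\epsilon_C)_1\circ(B\Omega_\infty f)_1
\]
as a commuting diagram of complexes. By Lemma~\ref{filteredqis}(2) the counits $\epsilon_{C'}$ and $\epsilon_C$ are filtered quasi-isomorphisms, hence quasi-isomorphisms; and since the weak-equivalence hypothesis says $\Omega_\infty f$ is a quasi-isomorphism of dg algebras, Lemma~\ref{filteredqis}(1) shows $B\Omega_\infty f$ is a filtered quasi-isomorphism, hence a quasi-isomorphism. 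Thus $(\epsilon_{C'})_1$, $(\epsilon_C)_1$ and $(B\Omega_\infty f)_1$ are all quasi-isomorphisms, and two-out-of-three applied to the displayed equation forces $f_1$ to be one as well; that is, $f$ is a quasi-isomorphism.

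For (1), I would transport the chosen admissible filtrations of $C'$ and $C$ to the cobar constructions, the map $\Omega_\infty f$ being filtered because each component $f_i$ is. It then suffices to prove that $\AG_i(\Omega_\infty f)$ is a quasi-isomorphism for every $i$, after which exhaustiveness and boundedness below of the admissible filtration give the conclusion by the comparison theorem for filtered complexes. The crucial simplification, exactly as in the proof of Lemma~\ref{filteredqis}, is that inside a fixed graded piece every tensor factor has filtration degree at least $1$, so only lengths $k\leq i$ occur and $\AG_i(\Omega_\infty C)$ is the \emph{finite} sum $\bigoplus_{i_1+\cdots+i_k=i}\AG_{i_1}(C)[1]\otimes\cdots\otimes\AG_{i_k}(C)[1]$. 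On this finite piece I would run a second, tensor-length filtration; its associated graded retains only the part of the cobar differential built from the internal differential $\AG(\Delta_1)$, since the higher coproducts strictly raise length. The resulting $E_1$-term is a finite sum of tensor powers of $\AG(f_1)$, and as $\AG(f_1)$ is a quasi-isomorphism by the filtered-quasi-isomorphism hypothesis and $\otimes=\otimes_S$ is exact ($S$ being separable), the K\"unneth theorem makes each tensor power, and hence $\AG_i(\Omega_\infty f)$, a quasi-isomorphism.

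The main obstacle is convergence. Because $C[1]$ is nonzero in degree $0$ (it contains $C_1=V$), the cobar construction has arbitrarily long tensors in each fixed homological degree, so a length filtration applied directly to $\Omega_\infty C$ need not converge and a single naive spectral sequence fails. The remedy is precisely the two-step procedure above: filtering first by the admissible filtration coming from $C$ confines us to the finite pieces $\AG_i$, in which lengths are bounded by $i$, and only there do we filter by length, so that both the K\"unneth reduction and the comparison theorem apply. Throughout, exactness of $\otimes_S$ is what lets the associated graded of a tensor product be computed factorwise and the K\"unneth isomorphism be invoked.
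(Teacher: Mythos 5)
Your proposal is correct and takes essentially the same route as the paper: for part (2) you use exactly the paper's argument, namely the commutative square $f\circ\epsilon_{C'}=\epsilon_C\circ B\Omega_\infty f$ combined with Lemma~\ref{filteredqis} and two-out-of-three. For part (1), where the paper simply cites Lef\`evre's proof as going through without change, your two-step filtration argument (first the admissible filtration, so that each piece $\AG_i(\Omega_\infty C)$ involves only tensor lengths $\leq i$, then the finite length filtration inside each such piece, with K\"unneth and exactness of $\otimes_S$ handling the associated graded) is precisely that cited argument written out in the $A_\infty$ setting, and it is sound.
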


\begin{proof}
 For the first part, the proof given by Lefevre \cite[Lemma 1.3.2.2]{Lef} 
for filtered quasi-isomorphisms of cocomplete dg coalgebras goes through without change.
 For the second, consider the commutative diagram
 $$
 \xymatrix{
 C' \ar[r]^f & C \\
 B\Omega_\infty C' \ar[u]^{\epsilon_{C'}} \ar[r]^{B\Omega_\infty f} & B\Omega_\infty C \ar[u]_{\epsilon_{C}}
 }$$
By Lemma~\ref{filteredqis}, $\epsilon_C$, $\epsilon_{C'}$
and $B\Omega_\infty f$ are quasi-isomorphisms. 
Hence $f$ is a quasi-isomorphism.
\end{proof}

\begin{corollary} [Keller \cite{Ke0}]
\label{cor:qi=we}
Let $f\colon C'\to C$ be a morphism of $A_\infty$-coalgebras 
which is also compatible with strictly positive gradings on $C'$ and $C$. 
If $f$ is a quasi-isomorphism, then it is a weak equivalence.
\end{corollary}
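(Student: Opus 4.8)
The plan is to reduce the statement to Lemma~\ref{cocompleteisos}(1) by upgrading $f$ from a quasi-isomorphism to a \emph{filtered} quasi-isomorphism of cocomplete $A_\infty$-coalgebras. The strictly positive grading is exactly what makes this upgrade possible: it supplies a canonical admissible filtration on each coalgebra and, more importantly, lets us pass the quasi-isomorphism hypothesis on $f_1$ down to the associated graded. In this way the corollary recovers Keller's result by reducing it to the cocomplete case already treated in Lemma~\ref{cocompleteisos}.

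First I would construct the filtrations. Writing $C=\bigoplus_{w\geq1}C^{(w)}$ for the weight decomposition coming from the strictly positive grading, I set $C(i)=\bigoplus_{w\leq i}C^{(w)}$, and similarly for $C'$. Strict positivity gives $C(0)=0$ and $C=\bigcup_i C(i)$. Since the coproducts $\Delta_n$ are compatible with the weight grading (they preserve total weight), each $\Delta_n$ sends $C(i)$ into the weight-$\leq i$ part of $C^{n}$, which is precisely the induced filtration; here one uses that every homogeneous factor has weight $\geq1$, so that any fixed total weight is realised by only finitely many tensor lengths $n$. Thus both filtrations are admissible and $C,C'$ are cocomplete.

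The crux is then to verify that $f$ is a filtered quasi-isomorphism for these filtrations. Each component $f_i\colon C'\to C^{i}$ preserves weight, hence is filtration-compatible, so the only real point is that the induced map $\AG(f_1)$ on associated gradeds is a quasi-isomorphism. This is the one step where a genuine grading, rather than a mere filtration, is indispensable: in general the associated graded of a quasi-isomorphism need not be a quasi-isomorphism, but because our filtration splits as the weight grading we have $\AG(C)=\bigoplus_w C^{(w)}$, with differential the weight-preserving $\Delta_1$, and $\AG(f_1)$ is just $f_1$ decomposed by weight. As $f_1$ preserves weight and homology commutes with the weight decomposition, $f_1$ being a quasi-isomorphism forces each restriction $(C')^{(w)}\to C^{(w)}$ to be a quasi-isomorphism; hence $\AG(f_1)$ is one too.

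Having exhibited $f$ as a filtered quasi-isomorphism of cocomplete $A_\infty$-coalgebras, Lemma~\ref{cocompleteisos}(1) immediately gives that $f$ is a weak equivalence, which is the assertion. I expect the associated-graded step to be the main obstacle to get right; the construction of the filtration and the compatibility of the $f_i$ are routine once one has unwound the definitions of admissible filtration and filtered quasi-isomorphism.
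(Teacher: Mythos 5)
Your proof is correct and takes essentially the same route as the paper: the paper likewise turns the strictly positive gradings into admissible filtrations, notes that $f$ is then automatically a filtered quasi-isomorphism, and applies Lemma~\ref{cocompleteisos}. Your write-up merely fills in the details (the weight filtration, the splitting $\AG(C)\cong\bigoplus_w C^{(w)}$, and homology commuting with the weight decomposition) that the paper's proof leaves as ``obvious''.
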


\begin{proof}
The additional positive gradings give rise to admissible filtrations 
on $C'$ and $C$ in an obvious way. 
If $f$ is a quasi-isomorphism, then it is automatically a filtered 
quasi-isomorphism, and then Lemma~\ref{cocompleteisos} applies.
\end{proof}

\begin{proposition}\label{prop:Tor}
Suppose that $C$ is a cocomplete $A_\infty$-algebra and
we have a quasi-isomorphism $\Omega_\infty C\to A$ of dg algebras. 
Then $C$ is weakly equivalent to $BA$. 
In particular we have an isomorphism of graded coalgebras
\[
  H(C)\cong \Tor^{A}(S,S).
\]
\end{proposition}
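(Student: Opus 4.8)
The plan is to realise $C$ and $BA$ as the two ends of a span of weak equivalences passing through $B\Omega_\infty C$, and then to extract the homology statement from the classical computation of $H(BA)$.

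First I would record that all three objects in sight are cocomplete: $C$ by hypothesis, $B\Omega_\infty C$ via the admissible filtration constructed in the proof of Lemma~\ref{filteredqis}, and $BA$ via its primitive filtration, which is admissible. Applying the bar functor to the given quasi-isomorphism $\Omega_\infty C\to A$ of dg algebras, Lemma~\ref{filteredqis}(1) shows that the induced map $B\Omega_\infty C\to BA$ is a filtered quasi-isomorphism, while Lemma~\ref{filteredqis}(2) shows that $\epsilon_C\colon B\Omega_\infty C\to C$ is a filtered quasi-isomorphism. By Lemma~\ref{cocompleteisos}(1) both maps are therefore weak equivalences, so the span
\[
  C \xleftarrow{\ \epsilon_C\ } B\Omega_\infty C \longrightarrow BA
\]
exhibits $C$ as weakly equivalent to $BA$, interpreting ``weakly equivalent'' as connection by such a zig-zag of weak equivalences.

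For the homology statement I would first invoke Lemma~\ref{cocompleteisos}(2), which says that each weak equivalence of cocomplete $A_\infty$-coalgebras is a quasi-isomorphism; applying it to both legs of the span produces isomorphisms $H(C)\cong H(B\Omega_\infty C)\cong H(BA)$ on the underlying homology. These are induced by the degree-one components $f_1$ of the respective morphisms, and since such $f_1$ intertwine the comultiplications $\Delta_2$ up to correction terms involving $\Delta_1$ (which die on passing to homology), the induced maps respect the coalgebra structure that $\Delta_2$ determines on homology. Hence $H(C)\cong H(BA)$ is an isomorphism of graded coalgebras. Finally I would appeal to the standard identification $H(BA)\cong\Tor^{A}(S,S)$, with the comultiplication on $\Tor$ being the usual one coming from the diagonal; in the non-unital setting this records exactly the positive-degree $\Tor$-groups. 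Composing with the preceding isomorphism gives $H(C)\cong\Tor^{A}(S,S)$ as graded coalgebras.

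The conceptual core is a direct assembly of Lemmas~\ref{filteredqis} and \ref{cocompleteisos}, so I expect the main obstacle to be bookkeeping rather than strategy: one must check that the coalgebra structure surviving to homology through the zig-zag is genuinely the $\Tor$-comultiplication, which means tracking the $\Delta_2$-components carefully through $\epsilon_C$ and through the bar functor, and confirming that the homology of the bar complex carries its classical coalgebra structure on $\Tor^{A}(S,S)$.
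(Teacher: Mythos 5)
Your proof is correct and follows essentially the same route as the paper's: both use Lemma~\ref{filteredqis} to obtain the span of filtered quasi-isomorphisms $C \leftarrow B\Omega_\infty C \to BA$, then Lemma~\ref{cocompleteisos} to conclude weak equivalence and hence quasi-isomorphism, finishing with the classical identification $H(BA)\cong\Tor^A(S,S)$. Your additional care in tracking the coalgebra structure through the zig-zag is a point the paper leaves implicit, but it is the same argument.
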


\begin{proof}
By Lemma~\ref{filteredqis}, we have filtered quasi-isomorphisms
$B\Omega_\infty C\to BA$ and
$B\Omega_\infty C \to C$.
In particular $C$ and $BA$ are weakly equivalent, and thus quasi-isomorphic, 
by Lemma~\ref{cocompleteisos}. 
Finally, recall that $H(BA)\cong \Tor^{A}(S,S)$.
\end{proof}
 
\goodbreak

Coordinates:

JC: City University, London; Joseph.Chuang.1@city.ac.uk

AK: University of Bath; a.d.king@bath.ac.uk

\end{document}